\newtheorem{theorem}{Theorem}
\title{\LARGE \bf
Least squares dynamics in Newton-Krylov Model Predictive Control
}
\author{Andrew Knyazev$^{1}$ and Alexander Malyshev$^{2}$
\thanks{*The work of the second author has been supported by MERL}
\thanks{$^{1}$Andrew Knyazev is with Mitsubishi Electric Research Laboratories (MERL)
201 Broadway, 8th floor, Cambridge, MA 02139, USA.
        {\tt\small knyazev@merl.com}, {\tt\small \url{www.merl.com/people/knyazev}}%
}
\thanks{$^{2}$Alexander Malyshev is with University of Bergen,
Department of Mathematics, PB 7803, 5020 Bergen, Norway \newline
        {\tt\small alexander.malyshev@uib.no}}%
}
\begin{document}

\maketitle
\thispagestyle{empty}
\pagestyle{empty}

\begin{abstract}
Newton-Krylov methods for nonlinear Model Predictive Control are pioneered by Ohtsuka under the name ``C/GMRES''.
Ohtsuka eliminates a system state over the horizon from Karush-Kuhn-Tucker stationarity conditions of a Lagrangian using equations of system dynamics. We propose instead using least squares to fit the state to the dynamics and some constraints on the state, if they are inconsistent. Correspondingly modified Newton-Krylov methods are described.
Numerical tests demonstrate workability of our modification.
\end{abstract}

\section{Introduction}
\label{sec:intro}

The paper is concerned with Model Predictive Control (MPC), see, e.g.,\ \cite{CaBo:04,RaMa:09,GrPa:11,DiFeHa:09},
for cases, where a model of a state of a system dynamics contradicts to some state constrains, making MPC infeasible. The contradictions may appear, e.g., from uncertainties and inaccuracies.
We propose using least squares to fit the state to both the dynamics and the contradicting constraints, within a framework of Newton-Krylov methods for nonlinear MPC (NMPC), pioneered by Ohtsuka \cite{Oht:04} for numerical solution of the MPC problems.

As an illustrating example (also used in our numerical tests), let us consider a continuous dynamical system where the state automatically satisfies an equality constraint, e.g., the state is on a smooth manifold, e.g.,\ a sphere in \cite{KnMa:15}. Discretized dynamical models, used for state prediction over a finite MPC horizon, approximate the continuous model and may not exactly satisfy the state equality constraint of the continuous case.
When the predictive horizon is long, the predicted trajectories may deviate far from the manifold determined by the equality constraint.
In \cite{KnMa:15}, we propose solving optimal control problems over smooth manifolds by using
the so-called ``structure preserving integration methods'' \cite{HaLuWa:06} within the Ohtsuka's method \cite{Oht:04,ShOhDi:09,KnFuMa:15}.

In the present work, we introduce a new prediction technology, aimed at removing the inconsistency of the state dynamics with some equality constraint on the state, by means of the least squares.
In our inner-outer approach, the inner layer is the  least squares fit of the state to the dynamics and the inconsistent constraints on the state, while the outer layer is NMPC solved by the Newton-Krylov methods of Ohtsuka.

We formulate a theoretical framework of two-level MPC, develop a numerical method
similar to Ohtsuka's method, and show numerical results for a test minimum-time problem describing motion on a unit sphere with constrained controls.

\section{Least squares dynamics in continuous MPC}
\label{sec:predict}

MPC determines a control input $u(t)$ by solving a prediction model on a finite horizon $[t,t+T]$. We consider a modified, using unknown disturbance vectors $\eta_f$ and $\eta_g$, variant of the prediction model from \cite{KnMa:15}, where the control $u(\tau)$ and a parameter vector $p$ minimize
\begin{equation}\label{min}
\min_{u,p} J(u,p),
\end{equation}
the performance index
\[
J(u,p) = \phi(x(\tau),p)|_{\tau=t+T}+\int_t^{t+T}L(\tau,x(\tau),u(\tau),p)d\tau
\]
subject to uncertain model dynamics
\begin{equation}\label{e1}
\frac{dx}{d\tau}=f(\tau,x(\tau),u(\tau),p)+\eta_f, \quad \tau\in[t,t+T],
\end{equation}
uncertain constraint on the state
\begin{equation}\label{e1s}
g(\tau,x(\tau),u(\tau),p)+\eta_g=0, \quad \tau\in[t,t+T],
\end{equation}
and the following certain constraints
\begin{equation}\label{e20}
x(\tau)|_{\tau=t} = x(t),
\end{equation}
\begin{equation}\label{e2}
C(\tau,x(\tau),u(\tau),p)|_{\tau\in[t,t+T]} = 0,
\end{equation}
\begin{equation}\label{e3}
\psi(x(\tau),p)|_{\tau=t+T} = 0.
\end{equation}
The initial value $x(\tau)|_{\tau=t}$ for the time-dependent differential equation (\ref{e1})
is the current state vector $x(t)$ of the dynamic system. The control vector $u=u(\tau)$,
which solves the prediction problem, is used as an input to control the dynamic system at time $t$.
The components of the vector $p(t)$ are parameters of the system.

The generally nonlinear equation~(\ref{e1}) exactly describes the model system dynamics,
while the generally nonlinear constraint \eqref{e1s} is also exact. But the
disturbance vectors $\eta_f$ and $\eta_g$ are unknown and, if dropped, may result in inconsistency for
arbitrary $u$ and $p$, thus leading to an infeasible MPC problem.
Assuming the vector-functions $u$ and $p$ fixed, the disturbance vectors $\eta_f$ and $\eta_g$ can be minimized
with respect to the function $x$ over the horizon via least squares, i.e.
\begin{equation}\label{LS}
\min_{x} S(x),
\end{equation}
where
\begin{align*}
S(x)=\left\|f(\tau,x(\tau),u(\tau),p)-dx/d\tau\right\|^2_f\\
+\left\|g(\tau,x(\tau),u(\tau),p)\right\|^2_g,
\end{align*}
and $\|\cdot\|_f$ and $\|\cdot\|_g$ are functional norms, e.g., based on the weighted $L_2$ norm of a function $h$
as $\|h\|_{W^{-1}}^2=\int h^T(\tau)W^{-1}(\tau)h(\tau)d\tau$ with the weight matrix $W^{-1}(\tau)$.
We note that the solution $x(\tau)$ over the horizon $\tau\in[t,t+T]$ has the given fixed initial, when $\tau=t$, value $x(t)$.

\section{Relaxed dynamics alternating minimization}
\label{sec:am}
Our discussion in \S \ref{sec:predict} motivates relaxing \eqref{e1} and \eqref{e1s} by simply adding the term $S(x)$ minimized in \eqref{LS} to the performance index $J(u,p)$ to be minimized in \eqref{min}, i.e.
\begin{equation}\label{minx}
\min_{u,p,x} J(u,p)+S(x),
\end{equation}
subject to only the certain constraints, i.e. \eqref{e20}, \eqref{e2}, and \eqref{e3}.

Explicitly adding $x$ to the set of minimization variables in \eqref{minx}, may add computation costs
to perform minimization, compared to the original setup \eqref{min}.
A well known idea of alternating minimization, see, e.g.,\ \cite{pu2014fast}, may reduce computations
by iteratively minimizing $J(u,p)+S(x)$ alternatively and separately with respect to $u,p$ and with respect to $x$.

One can interpret such an alternating minimization as inner-outer approach, where the inner layer is the  least squares fit \eqref{LS} of the state to the dynamics and the inconsistent constraints on the state, while the outer layer is NMPC minimization \eqref{min}, solved iteratively. Newton-Krylov methods of Ohtsuka \cite{Oht:04,KnFuMa:15} are examples of interest of iterative minimization \eqref{min} of the performance index. In the next section, we describe in the discrete case, how the original setup from \cite{Oht:04}, where \eqref{e1} and \eqref{e1s} are treated as exact certain constraints, can be modified to substitute the least squares minimization \eqref{LS} for \eqref{e1} and \eqref{e1s},
formulating the discrete Karush-Kuhn-Tucker (KKT) necessary conditions of \eqref{min} with the relaxed dynamics.

\section{Least squares discrete dynamics in KKT}
\label{sec:nk}
Continuous formulation of the finite horizon prediction problem stated above can be discretized
on a uniform time grid over the horizon $[t,t+T]$ partitioned into $N$ equal time steps of size $\Delta\tau$, and the time-continuous vector functions $x(\tau)$ and $u(\tau)$ are sampled at the grid points $\tau_i$, $i=0,1,\ldots,N$ and denoted by the indexed values $x_i$ and $u_i$ respectively. The integral of the performance cost $J$ over the horizon is approximated by means of the rectangular quadrature rule. The time derivative of the state vector is approximated by the forward difference formula.

Before deriving the Euler equations for the NMPC formulation, we discretize $x$ in the least squares minimization~\eqref{LS},
\begin{align*}
\min_{x_1,x_2,\ldots x_N} \sum_{i=0}^{N-1}\|(x_{i+1}-x_{i})/\Delta\tau-
f(\tau_i,x_i,u_i,p)\|^2_{W_f^{-1}}\\
+\|g(\tau_{i+1},x_{i+1},u_{i+1},p)\|^2_{W_g^{-1}},
\end{align*}
keeping the first component $x_0=x(t)$ fixed, where $\|\cdot\|_{W^{-1}}$ denote weighted,
using a matrix $W$, $2$-norms of vectors.

When the disturbances $\eta_f$ and $\eta_g$ are of random nature, the covariance matrices $W_f$ and $W_g$ may be available.
In our test examples in \S \ref{sec:ex}, we use the covariance matrices of the form $W_f=\alpha^{-1}I$ and $W_g=\beta^{-1}I$ with $\alpha=1$ and a suitable scalar $\beta>0$, with $I$ being the identity matrix.

For convenience, we introduce the block bidiagonal matrix
\[
B = \frac{1}{\Delta\tau}\begin{pmatrix}I\\-I&I\\&\ddots&\ddots\\&&-I&I\end{pmatrix}
\]
and the vectors
\[
R=B\begin{bmatrix}x_1\\x_2\\\vdots\\x_N\end{bmatrix}-
\begin{bmatrix}f(\tau_0,x_0,u_0,p)+x_0/\Delta\tau\\
f(\tau_1,x_1,u_1,p)\\\vdots\\f(\tau_{N-1},x_{N-1},u_{N-1},p)\end{bmatrix},
\]
\[
G = \begin{bmatrix}g(\tau_1,x_1,u_1,p)\\g(\tau_2,x_2,u_2,p)\\\vdots
\\g(\tau_{N},x_{N},u_{N},p)\end{bmatrix}.
\]
In this notation, the discrete version of the least squares minimization \eqref{LS} takes the following form,
\[
\min_xR^TW_f^{-1}R+GW_g^{-1}G.
\]
The gradients with respect to $x$ of the vectors $G$  and $R$ equal
\[
\nabla G=\begin{bmatrix}\nabla_xg(\tau_1,x_1,u_1,p)\\&\hspace{-5em}\nabla_xg(\tau_2,x_2,u_2,p)\\&\ddots
\\&&\hspace{-4em}\nabla_xg(\tau_{N},x_{N},u_{N},p)\end{bmatrix},
\]
\[
\nabla R=B-\begin{bmatrix}0\\\nabla_xf(\tau_1,x_1,u_1,p)&0\\&\hspace{-5em}\nabla_x f(\tau_2,x_2,u_2,p)&0\\
&\ddots&&\hspace{-2em}\ddots\\&&\hspace{-7em}\nabla_xf(\tau_{N-1},x_{N-1},u_{N-1},p)&0\end{bmatrix}.
\]
Hence the solution $x_i$, $i=1,\ldots N$, of the discrete least squares minimization satisfies the equation
\begin{equation}\label{e4}
(\nabla R)^TW_f^{-1}R+(\nabla G)^TW_g^{-1}G=0.
\end{equation}

The discretized optimal control problem NMPC is then formulated as follows:
\[
\min_{u_i,p}\left[\phi(x_N,p) + \sum_{i=0}^{N-1}L(\tau_i,x_i,u_i,p)\Delta\tau\right],
\]
subject to the system (\ref{e4}) for $x_i$ and the equality constraints
\begin{equation}\label{e5}
C(\tau_i,x_i,u_i,p) = 0,\quad  i = 0,1,\ldots,N-1,
\end{equation}
\begin{equation}\label{e6}
\psi(x_N,p) = 0.
\end{equation}

Necessary optimality conditions for the discretized finite horizon problem
can be derived by means of the discrete Lagrangian function
\begin{eqnarray*}
&&\mathcal{L}(X,U)=\phi(x_N,p)+\sum_{i=0}^{N-1}
L(\tau_i,x_i,u_i,p)\Delta\tau\\
&&+\,\lambda_0^T[x_0-x(t)]\\[2ex]
&&+[\lambda_{1}^T\ldots\lambda^T_{N}][(\nabla R)^TW_f^{-1}R+(\nabla G)^TW_g^{-1}G]\Delta\tau\\
&&+\sum_{i=0}^{N-1}\mu_i^TC(\tau_i,x_i,u_i,p)\Delta\tau+\nu^T\psi(x_N,p),
\end{eqnarray*}
where we gather the variables into vectors $X = [x_i\; \lambda_i]^T$, $i=0,1,\ldots,N$, and
$U = [u_i\; \mu_i\; \nu\; p]^T$, $i=0,1,\ldots,N-1$.
Here, $\lambda=[\lambda_{1}^T\ldots\lambda^T_{N}]^T$ is the costate vector, and
$\mu$ is the Lagrange multiplier vector associated with the constraint~(\ref{e5}).
The terminal constraint (\ref{e6}) is relaxed by the aid of the Lagrange multiplier $\nu$.

Calculating the derivatives of the Lagrangian $\mathcal{L}$ we obtain the necessary
optimality KKT conditions,
$\mathcal{L}_{\lambda_i}=0$, $\mathcal{L}_{x_i}=0$, $i=0,1,\ldots,N$,
$\mathcal{L}_{u_j}=0$, $\mathcal{L}_{\mu_j}=0$, $i=0,1,\ldots,N-1$,
$\mathcal{L}_{\nu_k}=0$, $\mathcal{L}_{p_l}=0$.

We further convert the KKT conditions into a nonlinear equation $F[U,x,t]=0$,
where the vector $U$ combines the control input $u$, the Lagrange multiplier
$\mu$, the Lagrange multiplier $\nu$, and the parameter $p$, all in one vector:
\[
U(t)=[u_0^T,\ldots,u_{N-1}^T,\mu_0^T,\ldots,\mu_{N-1}^T,\nu^T,p^T]^T.
\]
The vector argument $x$ in $F[U,x,t]$ denotes the current measured or estimated
state vector, which serves as the initial vector $x_0$ in the following procedure,
which eliminates the state variables $x_i$ and costate variables $\lambda_i$.
\begin{enumerate}
\item Having the current state $x_0$, measured or estimated, we compute
$x_i$, $i=1,2\ldots,N$, by solving least squares equations (\ref{e4}) instead of the forward Euler method
$x_{i+1}=x_i+f(\tau_i,x_i,u_i,p)\Delta\tau$ of \cite{Oht:04}.

Then compute the costates $\lambda_i$, $i=N,N\!-\!1,\ldots,1$, from the system of linear equations
\[
\frac{\partial\cal L}{\partial x}(X,U)=0.
\]
The value $\lambda_N$ is defined by the differentiation of the term $\nu^T\psi(x_N,p)$
with respect $x$.

\item Calculate $F[U,x,t]$, using just obtained $x_i$ and $\lambda_i$, as
\begin{eqnarray*}
F[U,x,t]=\left[\begin{array}{c}\begin{array}{c}
\frac{\partial\cal L}{\partial u_0}(X,U)\\
\vdots\\\frac{\partial\cal L}{\partial u_i}(X,U)\\
\vdots\\\frac{\partial\cal L}{\partial u_{N-1}}(X,U)\end{array}\\\;\\
\begin{array}{c}C(\tau_0,x_0,u_0,p)\Delta\tau\\
\vdots\\C(\tau_i,x_i,u_i,p)\Delta\tau\\\vdots\\
C(\tau_{N-1},x_{N-1},u_{N-1},p)\Delta\tau\end{array}\\\;\\
\psi(x_N,p)\\[2ex]
\frac{\partial\cal L}{\partial p}(X,U)
\end{array}\right].
\end{eqnarray*}
\end{enumerate}

The equation with respect to the unknown vector $U(t)$
\begin{equation}\label{e7}
 F[U(t),x(t),t]=0
\end{equation}
gives the required necessary optimality conditions.

Ohtsuka in \cite{Oht:04} proposes solving \eqref{e7} using Newton-Krylov methods
applied a forward-difference approximation to the Jacobian $F_U$ as described in
\cite{Kel:95}. In the next section, we repeat the necessary details, following
\cite{Oht:04,KnFuMa:15,7526060},
only slightly modified to take into account the relaxed dynamics.

\section{Newton-Krylov methods to solve KKT}
\label{sec:algo}

Let us assume that the dynamic system, which is controlled with the MPC approach, is sampled on a uniform time grid $t_j=j\Delta t$, $j=0,1,\ldots$ and denote $x_j=x(t_j)$. Equation (\ref{e7}) must be solved at each time step $t_j$ online on the controller board, which is the most computationally challenging part of an NMPC implementation for systems with fast dynamics.

The nonlinear equation $F[U_j,x_j,t_j]=0$ with respect to the unknown variables $U_j$ approximating $U(t_j)$
is equivalent to the following equation
\[
F[U_j,x_j,t_j]-F[U_{j-1},x_j,t_j]=b_j,
\]
where
\begin{equation}\label{e8}
b_j=-F[U_{j-1},x_j,t_j].
\end{equation}

Using a small scalar $h>0$, which is, in general, different from the time steps $\Delta t$ and $\Delta\tau$, we introduce, as, e.g., in \cite{Kel:95}, the forward difference operator
\begin{eqnarray}\label{e9}
a_j(V)=(F[U_{j-1}+hV,x_j,t_j]-F[U_{j-1},x_j,t_j])/h
\end{eqnarray}
approximating the derivative $F_U[U_{j-1},x_j,t_j](V)$ along the direction $V$.
We remark that the equation $F[U_j,x_j,t_j]=0$ is equivalent to the operator equation
$a_j(\Delta U_j/h)=b_j/h$, where $\Delta U_j=U_j-U_{j-1}$.

Let us introduce an $m\times m$ matrix $A_j$ with the columns $A_je_k$, $k=1,\ldots,m$,
defined by the formula $A_je_k=a_j(e_k)$, where $m$ is the dimension of the vector $U$
and $e_k$ denotes the $k$-th column of the $m\times m$ identity matrix.
The matrix $A_j$ is an $O(h)$ approximation of the Jacobian matrix $F_U[U_{j-1},x_j,t_j]$,
which is symmetric by Theorem~\ref{th1}.

\begin{theorem}\label{th1}
The Jacobian matrix $F_U[U,x,t]$ is symmetric.
\end{theorem}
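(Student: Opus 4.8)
The plan is to recognize the vector $F[U,x,t]$ as the \emph{reduced gradient} of the discrete Lagrangian $\mathcal{L}$ and then invoke the elementary fact that the Jacobian of a reduced gradient is a Schur complement of a Hessian, hence symmetric. First I would check, block by block, that the stacked vector $F[U,x,t]$ coincides with $\partial\mathcal{L}/\partial U$ for $U=[u_i^T,\mu_i^T,\nu^T,p^T]^T$: the blocks $\partial\mathcal{L}/\partial u_i$ appear verbatim, $\partial\mathcal{L}/\partial\mu_i=C(\tau_i,x_i,u_i,p)\Delta\tau$, $\partial\mathcal{L}/\partial\nu=\psi(x_N,p)$, and $\partial\mathcal{L}/\partial p$ appears verbatim (including the $\Delta\tau$ factors). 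Moreover, the elimination step producing $x_i$ from the least squares equations \eqref{e4} together with $x_0=x(t)$, and then $\lambda_i$ from $\partial\mathcal{L}/\partial x=0$, is exactly the requirement that $X=[x_i^T\;\lambda_i^T]^T$ satisfy $\partial\mathcal{L}/\partial X=0$: the $\lambda$-derivatives of $\mathcal{L}$ return $x_0-x(t)=0$ and \eqref{e4}, while the $x$-derivatives return the backward costate recursion (with $\lambda_N$ fixed by differentiating $\nu^T\psi(x_N,p)$). Thus $F(U)=\mathcal{L}_U(X(U),U)$, where $X(U)$ solves $\mathcal{L}_X(X(U),U)=0$ and $x(t),t$ enter only as fixed data.

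Next, assuming $\mathcal{L}$ is twice continuously differentiable and that $\mathcal{L}_{XX}$ is nonsingular at the point of interest, which is precisely what makes the normal equations \eqref{e4} and the backward costate recursion uniquely solvable, the implicit function theorem furnishes a differentiable map $U\mapsto X(U)$ with $X_U=-\mathcal{L}_{XX}^{-1}\mathcal{L}_{XU}$. Differentiating $F(U)=\mathcal{L}_U(X(U),U)$ by the chain rule gives
\[
F_U=\mathcal{L}_{UU}+\mathcal{L}_{UX}X_U=\mathcal{L}_{UU}-\mathcal{L}_{UX}\mathcal{L}_{XX}^{-1}\mathcal{L}_{XU}.
\]
Since $\mathcal{L}$ is scalar-valued and smooth, its full Hessian is symmetric, so $\mathcal{L}_{UU}^T=\mathcal{L}_{UU}$, $\mathcal{L}_{XX}^T=\mathcal{L}_{XX}$, and $\mathcal{L}_{XU}^T=\mathcal{L}_{UX}$. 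Hence $F_U=\mathcal{L}_{UU}-\mathcal{L}_{UX}\mathcal{L}_{XX}^{-1}\mathcal{L}_{UX}^T$ is the Schur complement of $\mathcal{L}_{XX}$ in the symmetric matrix $\begin{bmatrix}\mathcal{L}_{XX}&\mathcal{L}_{XU}\\\mathcal{L}_{UX}&\mathcal{L}_{UU}\end{bmatrix}$, and is therefore symmetric. Equivalently, after eliminating $X$ the KKT system becomes the stationarity system of a scalar function of $U$ alone, whose Jacobian is a Hessian.

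I expect the main obstacle to be the bookkeeping in the first step rather than any analytic difficulty: one must verify carefully, including all the $\Delta\tau$ weights and the boundary term for $\lambda_N$, that $F$ really is $\mathcal{L}_U$ and that the stated elimination really is $\mathcal{L}_X=0$, so that the clean chain-rule/Schur-complement argument applies. A secondary point worth stating explicitly is the standing regularity assumption that $\mathcal{L}_{XX}$ is invertible at the point considered; where it fails, $F_U$ need not be defined, and the theorem is to be read as holding wherever the elimination procedure is well posed. Finally, note that the claim concerns $F_U$ itself; the finite-difference matrix $A_j$ inherits symmetry only up to the $O(h)$ error already noted, so no further argument is needed there.
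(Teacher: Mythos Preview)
Your proposal is correct and follows essentially the same route as the paper: both recognize $F(U)=\mathcal{L}_U(X(U),U)$ with $X(U)$ defined implicitly by $\mathcal{L}_X=0$, differentiate via the chain rule and the implicit function theorem to obtain $F_U=\mathcal{L}_{UU}-\mathcal{L}_{UX}\mathcal{L}_{XX}^{-1}\mathcal{L}_{XU}$, and conclude symmetry from the Schur-complement structure of the Hessian. Your write-up is in fact more careful than the paper's in spelling out the block-by-block identification $F=\mathcal{L}_U$, the standing invertibility assumption on $\mathcal{L}_{XX}$, and the distinction between $F_U$ and its finite-difference surrogate $A_j$.
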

\begin{proof}
The equation $\mathcal{L}_X(X,U)=0$ is solvable with respect to $X$
due to the solvability of the least squares minimization for $x_i$ and a system of linear equations for $\lambda_i$.
The rest of the proof is identical to that in \cite{7526060} for the case of the exact dynamics and is provided here for completeness.

Let us denote the solution to $\mathcal{L}_X(X,U)=0$ by $X=g(U)$. Then $F[U]=\mathcal{L}_U(g(U),U)$ and
\[
F_U=\mathcal{L}_{UU}(g(U),U)+\mathcal{L}_{UX}(g(U),U)g_U.
\]
Differentiation of the identity $\mathcal{L}_U(g(U),U)=0$ with respect to $U$
gives the identity
\[
\mathcal{L}_{UU}(g(U),U)+\mathcal{L}_{UX}(g(U),U)g_U(U)=0.
\]
Differentiation of the identity $\mathcal{L}_X(g(U),U)=0$ with respect to $U$
gives the identity
\[
\mathcal{L}_{XU}(g(U),U)+\mathcal{L}_{XX}(g(U),U)g_U(U)=0.
\]
Hence $g_U=-\mathcal{L}_{XX}^{-1}(g(U),U)\mathcal{L}_{XU}(g(U),U)$ and
\begin{align}
F_U[U] =& \mathcal{L}_{UU}(g(U),U)\label{e7a}\\
&{}-\mathcal{L}_{UX}(g(U),U)\mathcal{L}_{XX}^{-1}(g(U),U)\mathcal{L}_{XU}(g(U),U),\notag
\end{align}
which is called the Schur complement of the symmetric Hessian matrix of $\mathcal{L}$
at the point $(X,U)=(g(U),U)$. The Schur complement of any symmetric matrix is
symmetric.
\end{proof}

Suppose that an approximate solution $U_0$ to the equation $F[U_0,x_0,t_0]=0$ is
available. Finding sufficiently accurate approximation $U_0$ is crucial for success of
Newton-like methods and search for it is usually a challenging operation. However, we omit
descriptions of suitable methods for finding the starting value $U_0$ here
because it is unrelated to, although needed for,  the ``warm-start'' procedure described below.

The first block entry of $U_0$ is taken as the input control $u_0$ at the state $x_0$.
The next state $x_1=x(t_1)$ is measured by sensors or estimated.

At the time $t_j$, $j>1$, we have the state $x_j$ and the vector $U_{j-1}$
from the previous time $t_{j-1}$. Our goal is to solve the following equation
with respect to $V$:
\begin{equation}\label{e11}
 a_j(V)=b_j/h.
\end{equation}
Then we set $\Delta U_j=hV$, $U_j=U_{j-1}+\Delta U_j$ and choose the first block
component of $U_j$ as the control $u_j$. The next system state $x_{j+1}=x(t_{j+1})$
is measured by sensors or estimated.

A direct way to solve the operator equation (\ref{e11}) is forming the matrix $A_j$
explicitly and then solving the system of linear equations $A_j\Delta U_j=b_j$;
e.g., by the Gaussian elimination.

A faster alternative is solving (\ref{e11}) by Krylov iterative methods
(such as GMRES \cite{Oht:04,Kel:95}, or MINRES \cite{km15}, possibly with preconditioning \cite{7526060}),
where the operator $a_j(V)$ is used without explicit construction of the matrix $A_j$; cf., \cite{Oht:04,Kel:95}.
Krylov methods, applied to a finite difference approximation \eqref{e9} of a Jacobian, are
call ``Newton-Krylov methods'' in \cite{Kel:95}.

\section{Proof of concept numerical example}
\label{sec:ex}
We numerically simulate a minimum-time motion from an initial state $x_0$ to a terminal state $x_f$
over the unit two-dimensional sphere in $R^3$. The system dynamics is governed by the system of ordinary differential equations
\[
\dot{x}=\begin{bmatrix}0&0&\cos u\\0&0&\sin u\\-\cos u&-\sin u&0\end{bmatrix}x,
\]
where the control input $u$ is subject to the inequality constraint $|u-c|\leq r$, which we relax
with the equality constraint
\[
(u-c)^2+u_{d}^2-r^2=0.
\]
The variable $u_d$ is fictitious and controlled by the scalar $w_d$ introduced below.

The cost function is $J=p-\int_t^{t_f}w_du_d$, where $p=t_f-t$ is the time to destination,
and $w_d$ is a small positive constant.

We choose the receding horizon coinciding with the interval $[t,t_f]$. The horizon is parameterized by the dimensionless time $\tau\in[0,1]$ by means of the linear mapping $\tau\to t+\tau p$. The normalized interval $[0,1]$ is partitioned uniformly into the grid $\tau_i=i\Delta\tau$, $i=0,1,\ldots,N$, with the step size $\Delta\tau=1/N$.
The discretized variables include the state $x_i$ and costate $\lambda_i$, the control input $u_i$ and slack variable $u_{d,i}$, the Lagrange multipliers $\mu_i$ and $\nu$, the parameter $p$.

The uncertain predictive model of the dynamical system on the receding horizon is the forward Euler method
\begin{equation}\label{eEu}
\frac{x_{i+1}-x_i}{p\Delta\tau}=A(u_i)x_i,
\end{equation}
where
\[
A(u_i)=\begin{bmatrix}0&0&\cos u_i\\0&0&\sin u_i\\-\cos u_i&-\sin u_i&0\end{bmatrix}.
\]
The truncation error of the Euler methods is the disturbance $\eta_f$ in (\ref{e1}).
We remark that $\eta_f$ is not random here and highly correlated with the state function $x(\tau)$.

It is directly verified that the continuous system dynamics $\dot{x}=A(u)x$ satisfies the equality constraint on the state $x_{i}^Tx_i-1=0$, $i=1,\ldots,N$. Hence the constraint (\ref{e2}) has
$g(x_i)=x_i^Tx_i-1$ and $\eta_g=0$. The goal of the least squares minimization is to
satisfy the constraint (\ref{e2}) ``softly.'' We note that for this test problem it is
possible to satisfy the state constraint $x_{i}^Tx_i-1=0$ exactly by projecting $x_{i+1}$
onto the unit sphere after every step of \eqref{eEu}; see, e.g., \cite{KnMa:15}.

Yet another way in this example to satisfy the equality constraint $x_{i}^Tx_i-1=0$ is to use the
so-called exponential integrator $x_{j+1}=\exp\left(A(u_j)x_j\right)$, which preserves
the norm $\|x_j\|_2$. We use this exponential integrator for numerical simulation of
the system dynamics replacing measurements.

The discretized cost function is
\[
J=p\left(1-\Delta\tau w_d\sum_{i=0}^{N-1}u_{d_i}\right).
\]

We choose our least squares approximation of the state $x_i$, with the fixed initial value $x_0$
and a scalar parameter $\beta\geq0$,
\begin{equation}\label{e55}
\min_{x_i}\sum_{i=1}^{N}\|x_{i}-x_{i-1}-\Delta\tau pA(u_{i-1})x_{i-1}\|^2_2+
\beta^2|x_{i}^Tx_{i}-1|^2.
\end{equation}
The parameter $\beta$ determines the force of satisfying the equality constraint
$x_{i}^Tx_i-1=0$: the larger the constant $\beta$ the larger the enforcement.

The least squares minimization problem is equivalent to the system of nonlinear equations
\[
\left(B^TB+2\beta^2\begin{bmatrix}(x_1^Tx_1-1)I\\
&\ddots\\&&(x_N^Tx_N-1)I\end{bmatrix}\right)\!\!
\begin{bmatrix}x_1\\\vdots\\x_N\end{bmatrix}
\]
\[
-\begin{bmatrix}\left(\Delta\tau pA(u_0)+I\right)x_0\\
0\\\vdots\\0\end{bmatrix}=S(x,u,p)=0,
\]
where
\[
B=\begin{bmatrix}I\\-\Delta\tau pA(u_1)-I&I\\
&\ddots&\ddots\\&&-\Delta\tau pA(u_{N-1})-I&I\end{bmatrix}.
\]

The corresponding discrete Lagrangian function then has the following form
\begin{align*}
{\cal L}=&\;p\left(1-\Delta\tau w_d\sum_{i=0}^{N-1}u_{d,i}\right)+\lambda^TS(x,u,p)\\
&+\!\sum_{i=0}^{N-1}\mu_i^T[(u_i-c)^2+u_{d,i}^2-r^2]+\nu^T(x_N-x_f).
\end{align*}

The costate $\lambda$ satisfies the formula
\[
\lambda=\begin{bmatrix}\lambda_1\\\vdots\\\lambda_N\end{bmatrix}=(B^TB+2\beta^2D)^{-1}
\begin{bmatrix}0\\\vdots\\0\\-\nu\end{bmatrix},
\]
where $C$ is the block diagonal matrix given by
\[
D=\mbox{blockdiag}\{(x_i^Tx_i-1)I+2x_ix_i^T\}.
\]

The function $F(U,x_0,t)$, where
\begin{align*}
U=[u_0,\ldots,u_{N-1},u_{d,0},\ldots,u_{d,N-1},\\
\mu_0,\ldots,\mu_{N-1},\nu,p]^T,
\end{align*}
has the following rows from the top to bottom:
\begin{align*}
&2\begin{bmatrix}\mu_0(u_0-c)\\\vdots\\\mu_{N-1}(u_{N-1}-c)\end{bmatrix}-\Delta\tau p(B\lambda)^T
\begin{bmatrix}A'(u_0)x_0\\\vdots\\A'(u_{N-1})x_{N-1}\end{bmatrix}\\
&\hspace{9em}-\Delta\tau p\begin{bmatrix}0\\A'(u_1)\lambda_1\\\vdots\\A'(u_{N-1})\lambda_{N-1}\end{bmatrix}^T(Bx);\\
&2\begin{bmatrix}\mu_0u_{d,0}\\\vdots\\\mu_{N-1}u_{d,N-1}\end{bmatrix}-\Delta\tau pw_d\begin{bmatrix}1\\\vdots\\1\end{bmatrix};\\
&\begin{bmatrix}(u_0-c)^2+u_{d,i}^2-r^2\\\vdots\\(u_{N-1}-c)^2+u_{d,N-1}^2-r^2\end{bmatrix};\\
&x_N-x_f;\\
&1-\Delta\tau w_d\sum\limits^{N-1}_{i=0}u_{d,i}-\Delta\tau(B\lambda)^T
\begin{bmatrix}A(u_0)x_0\\\vdots\\A(u_{N-1})x_{N-1}\end{bmatrix}\\
&-\Delta\tau \begin{bmatrix}0\\A(u_1)\lambda_1\\\vdots\\A(u_{N-1})\lambda_{N-1}\end{bmatrix}^T(Bx).
\end{align*}

The example is chosen here for historical reasons---one of tests from our prior work \cite{KnMa:15}.
It~is not the most beneficial one illustrating effectiveness of the proposed least squares fit of the state with uncertain dynamics and constraints over the horizon, because in this example the state constraint to the sphere can in practice be actually certain, and satisfied with high accuracy by other means; e.g., \cite{KnMa:15}. The role of this example is a
\emph{proof of concept}.

\emph{Remark.} A very important circumstance arises in the problems with the state constraints derived from the system dynamics. The number of terminal constraints must be reduced to the dimension of the smooth manifold determined by the equality constraint on the state. In our case, the dimension of the sphere equals 2, and, therefore, the Lagrange multiplier $\nu$ must contain only 2 components instead of 3.
In our MATLAB implementation, we keep the components of $\nu$ corresponding to the $x$ and $y$ coordinates
of the terminal state, but the last component of the right-hand side in the equation for the costate is set to zero.
If the above described reduction of the terminal constraint is not fulfilled, then the subsequent computations lead to singular Jacobians in the Newton-type iterations.

\section{Numerical results}
\label{sec:numer}

We perform several preliminary numerical experiments in MATLAB with the test problem from \S~\ref{sec:ex}.
Problem~\eqref{e55} is solved by the MATLAB function \texttt{lsqnonlin} for nonlinear least squares problems. The operator equation (\ref{e11}) is solved by the \texttt{gmres} function of MATLAB.
The relative error tolerance for the GMRES iterations is $tol=10^{-5}$.
The number of grid points on the horizon is $N=10$, the sampling time
of the simulation is $\Delta t=1/200$, and $h=10^{-8}$.

Other constants are as follows: $c=0.5$, $r=0.1$, $w_{d}=0.005$, $\beta=10$.

The initial value for $U_0$ is computed by the MATLAB function \texttt{fsolve}, which finds
a solution to nonlinear equations by Newton-type methods. We note that finding good initial
approximation for \texttt{fsolve} may be non-trivial.

The trajectory satisfying the system dynamics $\dot{x}=A(u)x$ has been computed by the simple
exponential integrator $x_{j+1}=\exp\left(A(u_j)x_j\right)$ substituting the measurements.

Figure \ref{fig1} shows the computed trajectory $(x,y,z)$ on the sphere for the test example.
Figure \ref{fig2} left panel shows the $(x,y)$-projection of the computed trajectory.
Figure \ref{fig2} right panel shows the input control variable with the constraints.
Figure \ref{fig4} displays the number of GMRES iterations at the grid points.
Finally, Figure \ref{fig5} displays the 2-norm of the residual function $F[U]$ that is supposed to vanish.

Since our implementation is not optimized, we do not provide the timing. However, we note that
computations by the function \texttt{lsqnonlin} are relatively time consuming. We also observe numerically that
a successful execution of Newton-type iterations requires solution of the least squares minimization
problem with sufficiently high accuracy.

Comparison with other relevant numerical methods based on, e.g., multiple shooting \cite{ShOhDi:09}
is a subject for future research.

\section*{Conclusions}
A novel concept of least squares relaxation of state dynamics and some constraints
in NMPC calculations of control over the horizon is proposed via alternating minimization
and implemented in Newton-Krylov iterative methods. Numerical results for a proof of concept example demonstrate feasibility of computer implementations of the proposed technology.
Future research is needed to design numerically efficient algorithms
and to test our techniques for large uncertainties.

\begin{figure}
\noindent\centering{
\includegraphics[width=.95\linewidth]{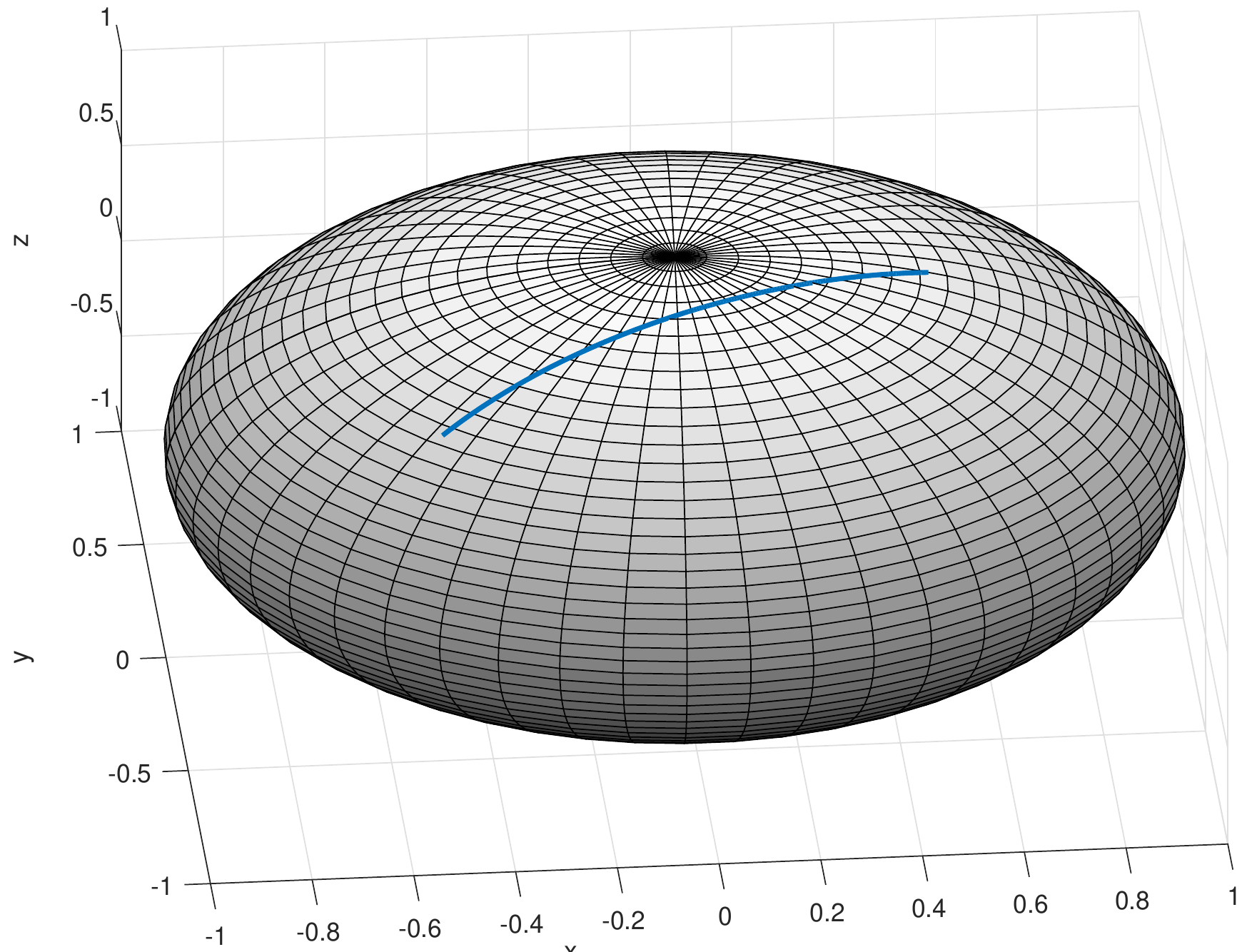}
}
\caption{Computed trajectory}
\label{fig1}
\end{figure}

\begin{figure}
\noindent\centering{
\includegraphics[width=.50\linewidth]{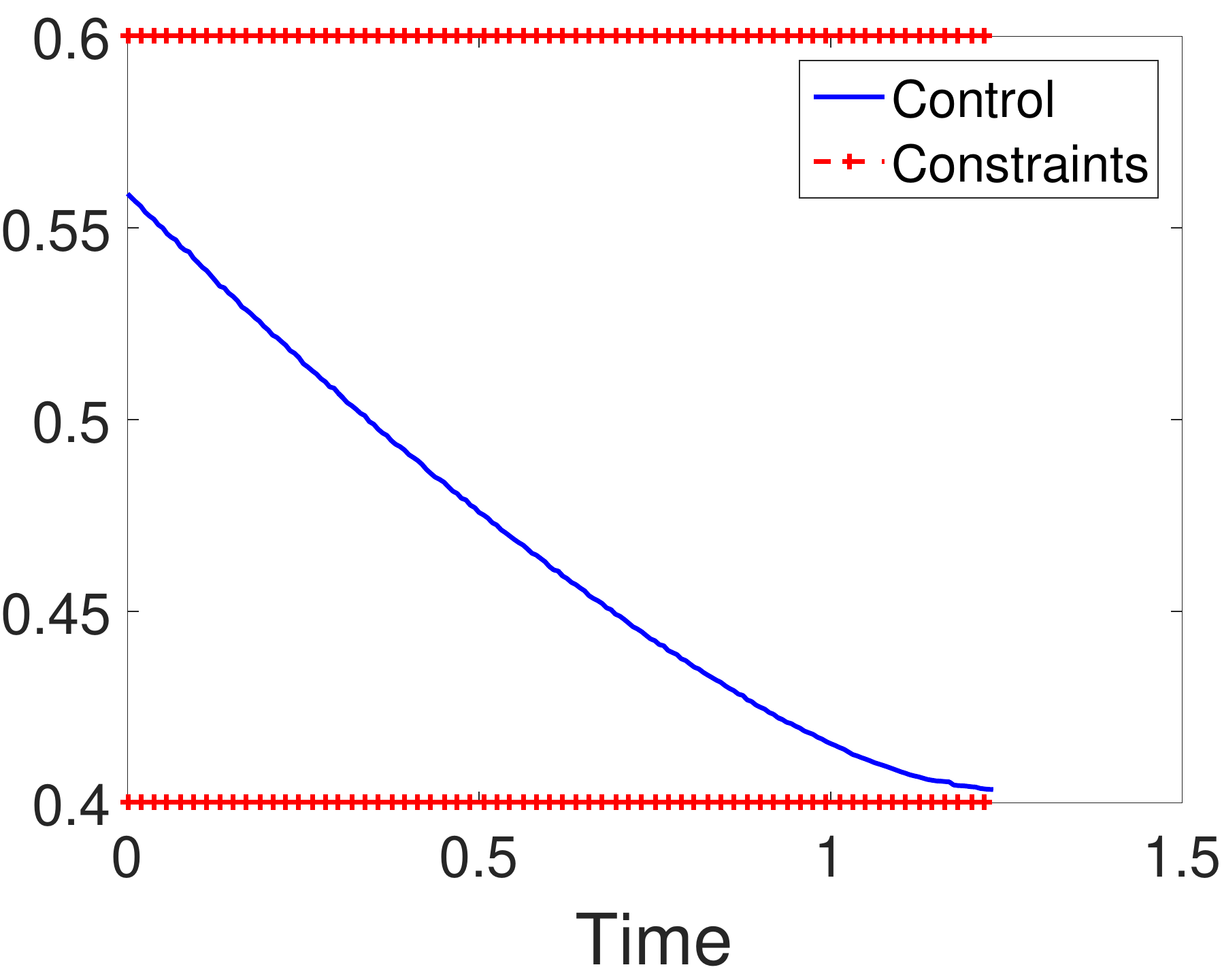}
\includegraphics[width=.46\linewidth]{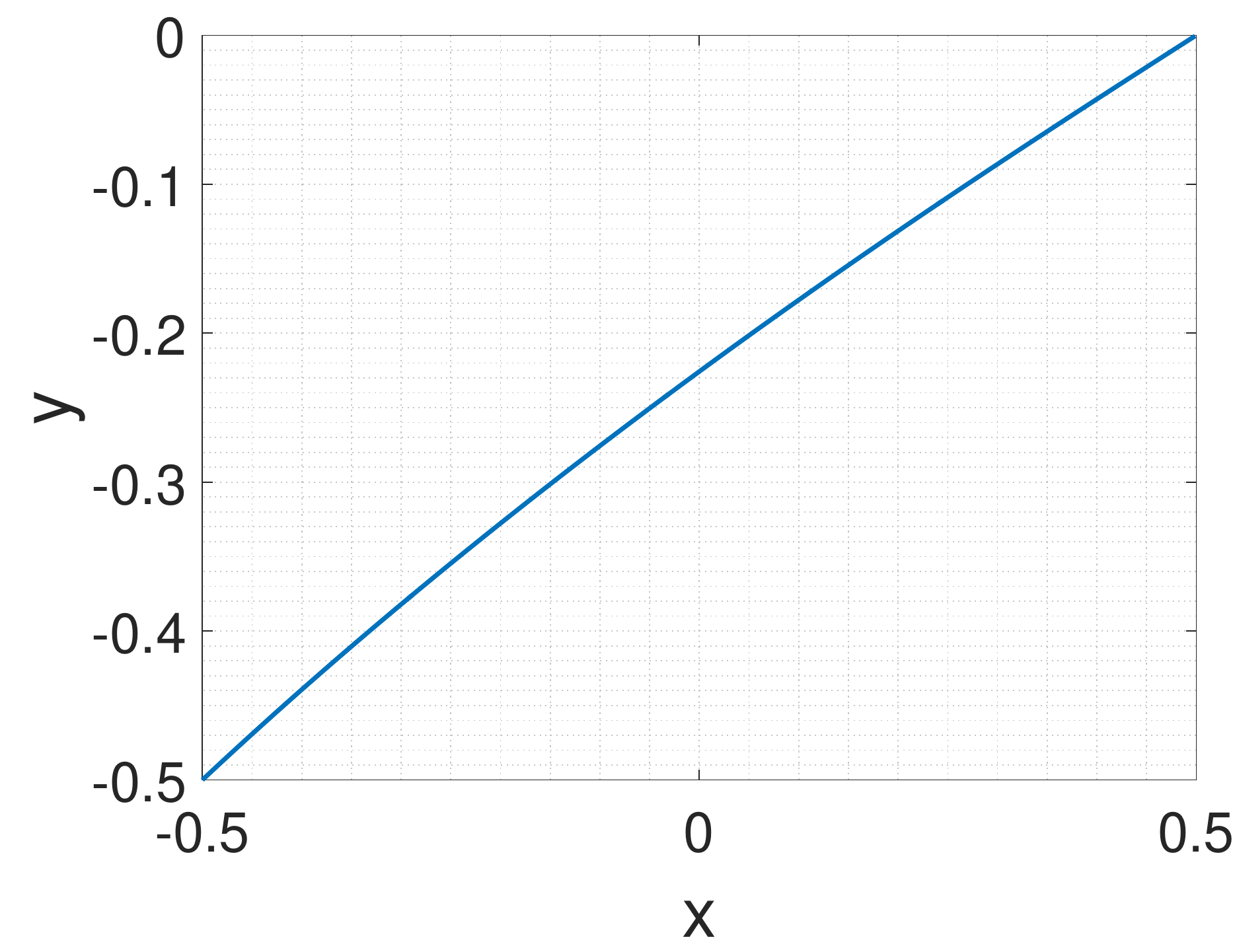}
}
\caption{$(x,y)$-projection of the trajectory (left) and control $u$ (right)}
\label{fig2}
\end{figure}

\begin{figure}
\noindent\centering{
\includegraphics[width=.95\linewidth]{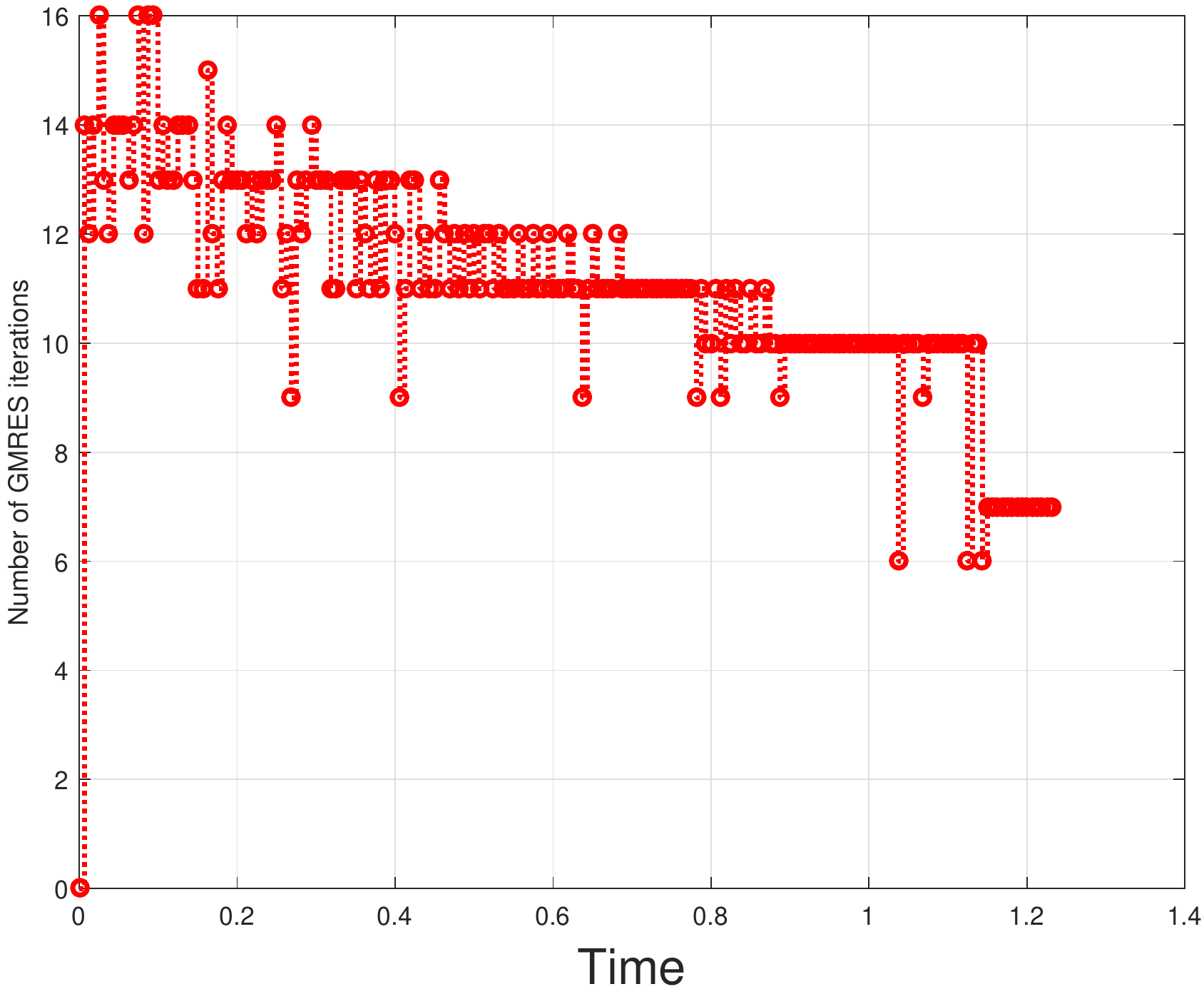}
}
\caption{The number of GMRES iterations}
\label{fig4}
\end{figure}

\begin{figure}
\noindent\centering{
\includegraphics[width=.95\linewidth]{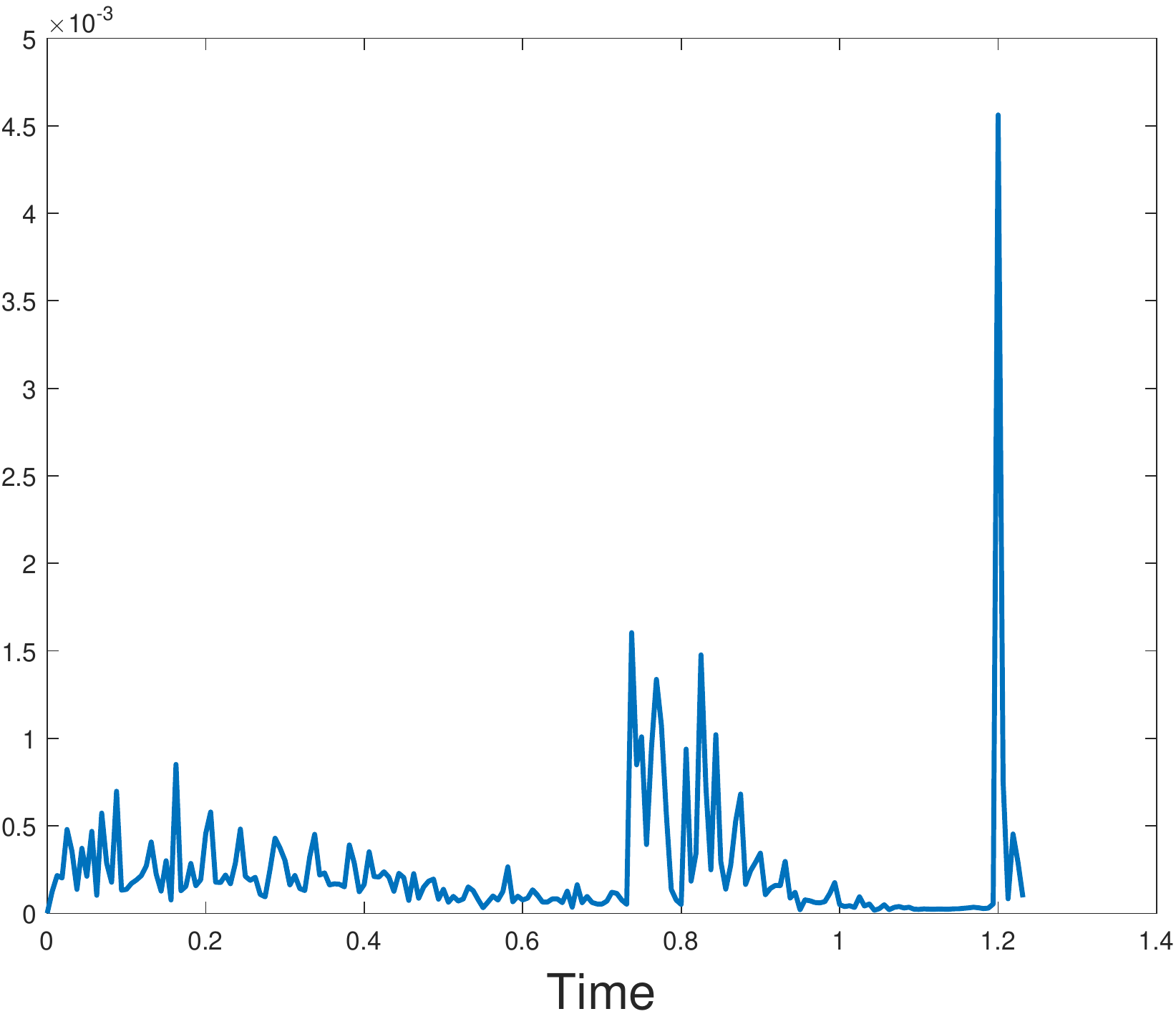}
}
\caption{2-norm of the residual $F[U]$}
\label{fig5}
\end{figure}

\bibliographystyle{IEEEbib}
\bibliography{refs}

\end{document}